\title[On the zeros of the Pearcey integral]{On the zeros of the Pearcey integral and a Rayleigh-type equation}
\author{Gerardo Hern\'andez-del-Valle}
\address{Direcci\'on General de Investigaci\'on Econ\'omica, Banco de M\'exico, Av. 5 de Mayo \# 18, Col. Centro Hist\'orico, M\'exico, D.F. CP 06059}
\email{gerardo.hernandez@banxico.org.mx}
\date{November 18, 2014}
\keywords{Pearcey function, boundary crossing, heat equation, Rayleigh-type equation}
\subjclass[2010]{Primary: 30E25, 35C99, 35K05, Secondary: 60H30 }
\newtheorem{theorem}{Theorem}[section]
\newtheorem{example}[theorem]{Example}
\newtheorem{examplen}[theorem]{Numerical Example}
\newtheorem{remark}[theorem]{Remark}
\newtheorem{definition}[theorem]{Definition}
\newtheorem{algorithm}[theorem]{Algorithm}
\begin{document}
\maketitle
\begin{abstract}
In this work we find a sequence of functions $f_n$ at which the integral
\begin{eqnarray}\label{zero}
v(t,x)=\int_{-\infty}^{\infty}e^{i\lambda x-\lambda^2t/2-\lambda^4/4}d\lambda
\end{eqnarray}
 is identically zero for all $t\geq 0$, that is
\begin{eqnarray*}
v(t,f_n(t))=0\qquad \forall t\geq 0.
\end{eqnarray*}
The function $v$, after  proper change of variables and rotation of the path of integration,  is known as the Pearcey integral or Pearcey function, indistinctly. We also show that each  $f_n$ is expressed in terms of a second order non-linear ODE, which turns out to be of the Rayleigh-type. Furthermore the initial conditions, which uniquely determine each $f_n$, depend on the zeros of an Airy function of order 4  defined as
\begin{eqnarray*}
\phi(x)=\int_{-\infty}^{\infty}e^{i\lambda x-\lambda^4/4}d\lambda.
\end{eqnarray*}

As a byproduct of these facts, we develop a methodology to find a class of functions  which solve the moving boundary problem of the heat equation. To this end, we make use of generalized Airy functions, which in some particular cases fall within the category of functions with infinitely many real zeros, studied by P\'olya.
\end{abstract}

\section{Introduction}
The Pearcey integral was first evaluated numerically by Pearcey  \cite{pearcey} in his investigation of the electromagnetic field near a cusp. The integral appears also  in optics \cite{berry}, in the asymptotics of special functions \cite{kaminski1}, in  probability theory \cite{tracy}, as the generating function of heat (and hence Hermite) polynomials of order $4k$ for $k\in \mathbb{N}$ \cite{widder}.  It also  falls into the category of functions considered by P\'olya \cite{polya}, that is functions with countably many zeros. For the numerical evaluation of the zeros of the Pearcey integral see for instance \cite{kaminski1}, this will be important since this zeros will correspond to the initial value of each function $f_n(0)$.

The main motivation  in finding the zeros of the Pearcey function, which solves the heat equation
\begin{eqnarray}\label{heat}
h_t(t,x)=\frac{1}{2}h_{xx}(t,x),
\end{eqnarray}
is due to the fact  that the  main building block used to construct  the density of the first time that a Wiener process hits a boundary $f$, is to find a function  $f$ such that
\begin{eqnarray*}
h(t,f(t))=0\qquad\forall t\geq 0.
\end{eqnarray*}
For example, suppose there is a financial contract which will be activated if ever the price of an asset $S$ (modelled as Brownian motion) reaches a prescribed boundary $f$. For instance, in Figure \ref{fig007} the blue line represents the evolution of the price of  $S_t$, for $t\in[0,10]$, in turn the red line represents a boundary which activates a contract if it is ever reached. In particular, the barrier option is a  contract of this type. For a more detailed exposition see for instance \cite{hernandez}. 
\begin{figure}
\hspace{-.5 cm}\includegraphics[scale=.5,height=7cm,width=9cm]{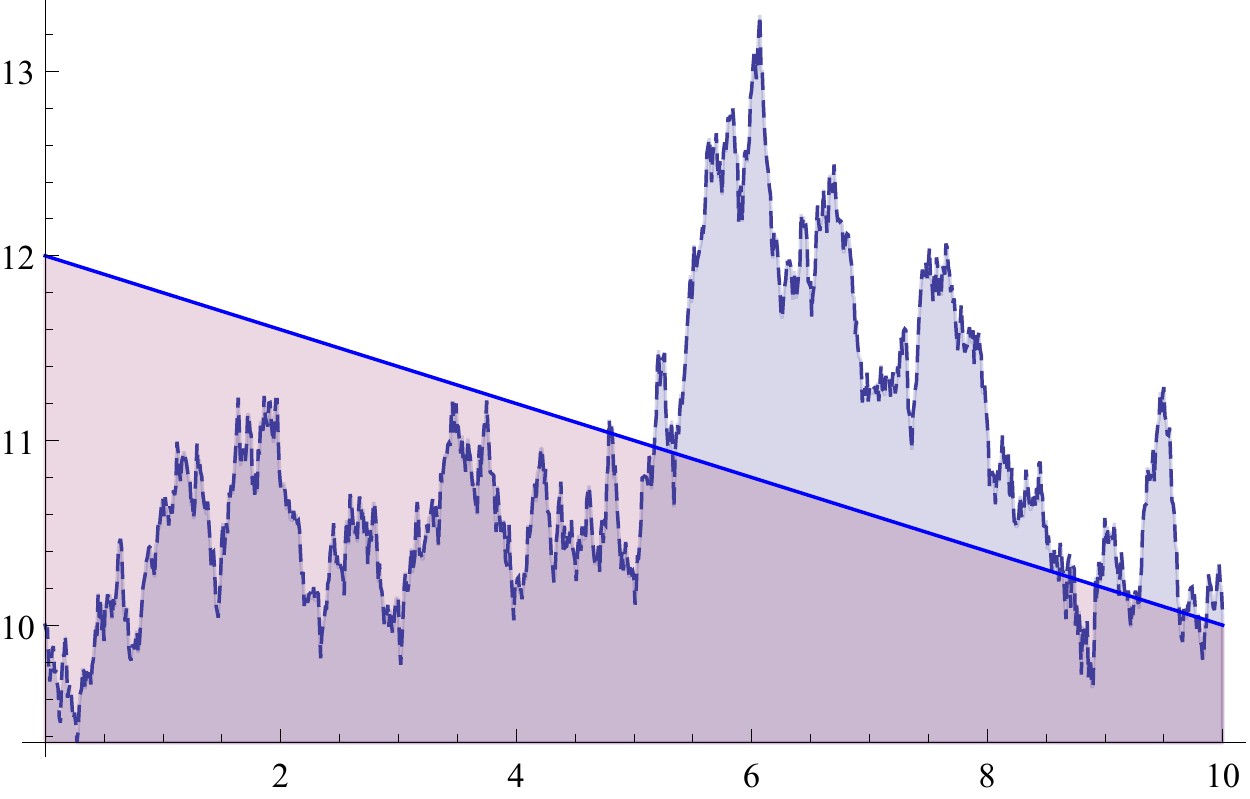}
\caption{The dotted line represents the price of some asset $S$, in turn, the solid line represents a boundary which will activate a contract the first time it is reached. In this example the contract was activated approximately at time  $t=5$. This is a random time, since it would have been impossible to foresee the outcome.\label{fig007}  }   
\end{figure}

Next, we note that for some constant $b\in\mathbb{R}$, the  function
\begin{eqnarray}\label{linear1}
h(t,x)=\frac{x}{\sqrt{2\pi t^3}}\exp\left\{-\frac{x^2}{2t}\right\}+b\frac{1}{\sqrt{2\pi t}}\exp\left\{-\frac{x^2}{2t}\right\},
\end{eqnarray}
with $(t,x)\in\mathbb{R}^+\times\mathbb{R}$, solves the heat equation (\ref{heat}). This is true since $h$ in (\ref{linear1}) is a linear combination of the fundamental solution of (\ref{heat}) and  its first derivative with respect to the space variable $x$. It is clear that the function (\ref{linear1})  equals  zero at $x=-bt$. Hence, for any $a\in\mathbb{R}$, and setting $x=a-bt$ in (\ref{linear1})  we obtain
$$
h(t,a-bt)=\frac{a}{\sqrt{2\pi t^3}}\exp\left\{-\frac{(a-bt)^2}{2t}\right\}, \quad(t,x)\in\mathbb{R}^+\times\mathbb{R}.
$$
We note that the right-hand side of this identity is in fact the density of the first time that a Brownian motion hits a linear boundary \cite[p. 196]{karatzas}. In practice these results are used for instance in (a) the valuation problem of financial assets, in particular in the valuation of {\it barrier options\/} [see Bj\"ork (2009)],  (b) in the quantificaction of counterparty risk [see Davis and Pistorious (2010)], and in general in physical problems.

The  main contributions of this work are, on the one hand,  finding the zeros of the Pearcey integral. On the other, advancing in the direction of developing a rather simple and straightforward methodology  to find explicit solutions of the time-varying boundary problem for the heat equation. In this regard we note that there exist  techniques to study the latter aforementioned problem in terms of solutions to integral equations \cite{fokas}. We recall that solutions in terms of integral equations,  in general can only be evaluated numerically. In turn, our approach leads to solutions in terms of ODEs.

The paper is organized as follows. In Section \ref{secairy} we introduce the Airy function of order 4. Next in Section \ref{secpearcey} we define the Pearcey integral and describe its connection with the Airy function of order 4. In Section \ref{secrayleigh} we derive a Rayleigh-type equation, whose solution  kills the Pearcey function. The techniques described in Section \ref{secrayleigh} are illustrated with examples in Section \ref{secexamples}. In Section \ref{secasymptotic} we derive a function which asymptotically solves the moving boundary problem for the Pearcey integral. This approximation can be helpful in the numerical solution of the Rayleigh equation. We conclude in Section \ref{conclusions}, with some final remarks.

\section{Generalized Airy function of order 4}\label{secairy}
With respect to the zeros of Fourier integrals, P\'olya proved  \cite{polya}  that all the zeros of
\begin{eqnarray}\label{poleq}
\int_{-\infty}^\infty e^{-u^{2m}+izu}du,\qquad \hbox{for }m=1,2, 3\dots
\end{eqnarray}
are real and infinitely many for $m>1$. In turn, the generalized Airy function $\phi$  of order 4 can be expressed as a solution of the following ODE
\begin{eqnarray}\label{ai}
\phi^{(3)}&=&x\phi,\\
\nonumber\phi^{(j)}&=&(j-3)\phi^{(j-4)}+x\phi^{(j-3)}\qquad \hbox{for } j>3.
\end{eqnarray}
One can prove, for instance applying the Fourier transform to (\ref{ai}) and solving the resulting equation,  that $\phi$  is a particular case of  (\ref{poleq})  when $m=2$, namely
\begin{eqnarray}\label{airy4}
\phi(x)=\frac{1}{2\pi}\int_{-\infty}^{\infty}\exp\left\{ixy-\frac{y^4}{4}\right\}dy.
\end{eqnarray}
Furthermore function $\phi$ is symmetric, with countably many zeros in the real line---hence oscillatory---and tends to zero as it increases to $\pm\infty$, see Figure \ref{fig00}.
Regarding the zeros of (\ref{airy4}), there exist asymptotic estimates which are  derived by means of the method of steepest descent \cite{senouf}.

\begin{figure}
\hspace{-.5 cm}\includegraphics[scale=.5,height=7cm,width=9cm]{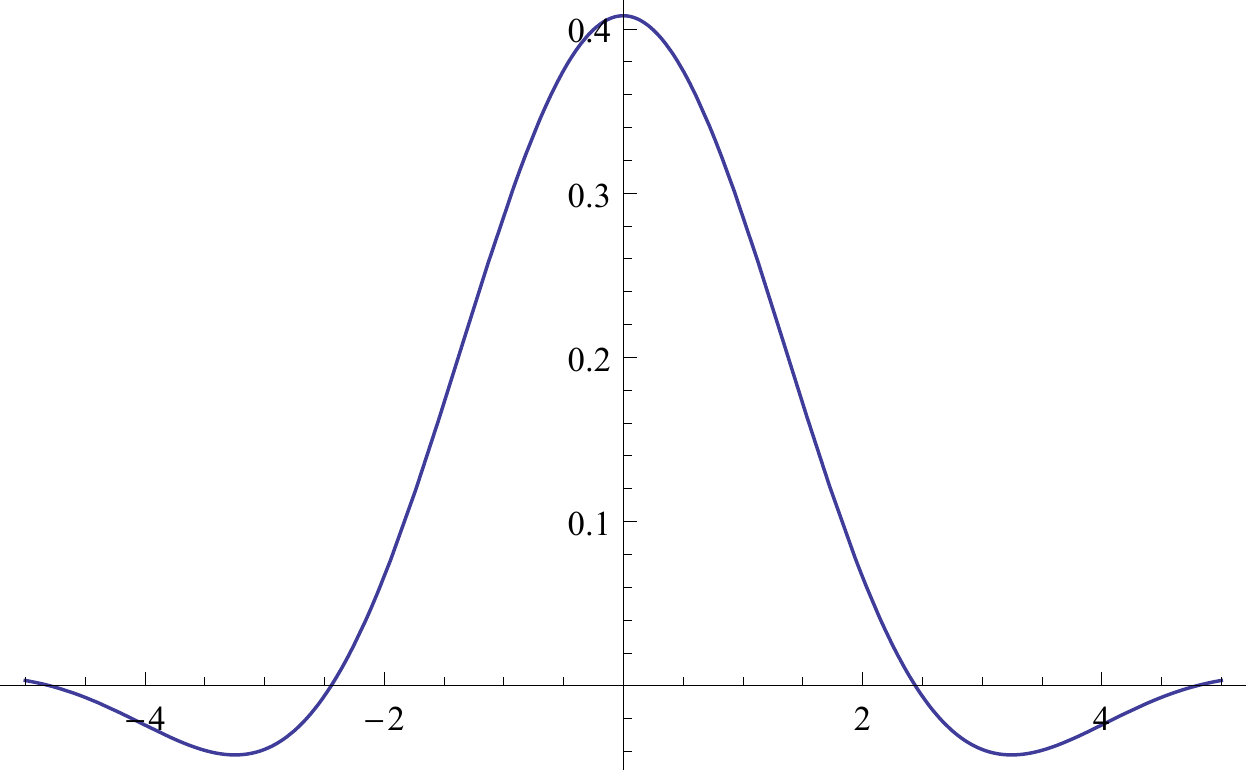}
\caption{We plot function $\phi$, defined in (\ref{airy4}). We observe the function is even and its first zero is at $\pm 2.44197$.}   \label{fig00} 
\end{figure}
\section{The Pearcey integral}\label{secpearcey}
Paris \cite{paris91,paris} analyzed  the asymptotic behavior of

\begin{eqnarray*}
P_n'(X,Y)=\int_{-\infty}^{\infty}e^{i(u^{2n}+Xu^n+Yu)}du,\qquad n\in\mathbb{N},\ n\geq 2,
\end{eqnarray*}
which  by rotation of the path of integration $(u=te^{\frac{\pi i}{4n}})$ and use of Jordan's lemma  (see \cite{senouf}) can be expressed as
\begin{eqnarray*}
P_n'(X,Y)=P_n(x,y)=e^{\frac{\pi i}{4n}}\int_{-\infty}^{\infty}e^{-t^{2n}-xt^n+iyt}dt,
\end{eqnarray*}
with $x=Xe^{-\frac{\pi i}{4}}$ and $y=Ye^{\frac{\pi i}{4n}}$. In particular, 
the Pearcey integral, which solves (\ref{heat}),  is the case $P_2$. More explicitly, we have the following.
\begin{definition} \cite{pearcey}The Pearcey integral  is defined as
\begin{eqnarray}\label{p2}
P_2'(X,Y)=\int_{-\infty}^{\infty}e^{i(u^{4}+Xu^2+Yu)}du.
\end{eqnarray}

\end{definition}
 In this work we study instead the  following Fourier integral
\begin{eqnarray}\label{pearcey}
v(t,x):=\frac{1}{2\pi}\int_{-\infty}^{\infty}\exp\left\{i\lambda x-\frac{1}{2}\lambda^2t-\frac{\lambda^4}{4}\right\}d\lambda,
\end{eqnarray}
because the zeros of $v$, for $(t,x)\in\mathbb{R}^+\times\mathbb{R}$, are expressed in terms of a continuously differentiable function $f$, as opposed to (\ref{p2}). See \cite{senouf}.
\begin{remark}
We observe that the  function $v$ in (\ref{pearcey})   is the convolution between the kernel of standard Brownian motion and the generalized Airy function of order 4, in equation (\ref{ai}).
\end{remark}

\section{Zeros of the Pearcey function}\label{secrayleigh}
\begin{remark}
Throughout this work, the $n$-th partial differentiation with respect to the space variable $x$ of any given function $v(t,x)$ is denoted as $v^{(n)}$.
\end{remark}
In this section we find the function $f$ for which the Pearcey function is zero for every $t\geq 0$. The idea is to exploit, on the one hand, the differential form of the Airy function of order 4, defined in (\ref{ai}), and on the other to use the fact that the Pearcey function solves the heat equation (\ref{heat}).

The main result is the following.
\begin{theorem}\label{th1} Suppose that $v$  is as in (\ref{pearcey}), $\phi$ solves (\ref{ai}),  $\xi$ is  such that $\phi(\xi)=0$, and
$f$ is a solution to the following Rayleigh-type ODE
\begin{eqnarray}\label{odesol}
f''(t)=2\left[f'(t)\right]^3-\frac{1}{2}tf'(t)-\frac{1}{4}f(t),
\end{eqnarray}
with $f(0)=\xi$ and $f'(0)=-\phi^{(2)}(\xi)/[2\phi^{(1)}(\xi)]$. Then, for every $t\geq 0$, we have 
\begin{eqnarray*}
v(t,f(t))=0.
\end{eqnarray*}
\end{theorem}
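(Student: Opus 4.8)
\section*{Proof sketch (plan)}

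The plan is to reduce the statement to a uniqueness assertion for a $2\times2$ linear system of ODEs. The starting point is a pair of structural identities for the integral $v$ in (\ref{pearcey}). Differentiating under the integral sign shows that $v$ solves the heat equation (\ref{heat}), i.e.\ $v_t=\tfrac12 v^{(2)}$. Next, integrating the exact $\lambda$-derivative of $\exp\{i\lambda x-\tfrac12\lambda^2 t-\tfrac14\lambda^4\}$ over $\mathbb{R}$ (the endpoint terms vanish because of the factor $e^{-\lambda^4/4}$) yields the order-$4$ ``Airy'' relation $v^{(3)}=xv+tv^{(1)}$, which reduces at $t=0$ to (\ref{ai}); differentiating it once in $x$ gives $v^{(4)}=v+xv^{(1)}+tv^{(2)}$. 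Finally $v(0,x)=\phi(x)$ by (\ref{airy4}), so the hypotheses of the theorem are conditions on the data at $t=0$.

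Next I would set $A(t)=v(t,f(t))$, $B(t)=v^{(1)}(t,f(t))$, $C(t)=v^{(2)}(t,f(t))$ and differentiate along the curve $x=f(t)$. Using the chain rule together with $v_t=\tfrac12 v^{(2)}$, $\partial_t v^{(1)}=\tfrac12 v^{(3)}$, $\partial_t v^{(2)}=\tfrac12 v^{(4)}$, and the two identities above evaluated at $x=f(t)$, one obtains a system that is closed and linear in $(A,B,C)$, with coefficients built from $t$, $f$, $f'$: \[A'=\tfrac12 C+f'B,\qquad B'=\tfrac12(fA+tB)+f'C,\qquad C'=\tfrac12(A+fB+tC)+f'(fA+tB).\]

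Now put $P:=A$ and $Q:=\tfrac12 C+f'B$, so that $P'=Q$. Substituting the system above into $Q'=\tfrac12 C'+f''B+f'B'$ and collecting terms gives \[Q'=\Bigl(\tfrac14+ff'\Bigr)A+\Bigl(\tfrac14 f+tf'+f''\Bigr)B+\Bigl(\tfrac14 t+(f')^2\Bigr)C.\] For $Q'$ to be a linear combination $\alpha A+\beta Q=\alpha A+\beta f'B+\tfrac{\beta}{2}C$, matching the $C$- and $A$-coefficients forces $\beta=\tfrac12 t+2(f')^2$ and $\alpha=\tfrac14+ff'$, and then matching the $B$-coefficient requires $\beta f'=\tfrac14 f+tf'+f''$, which is precisely the Rayleigh-type equation (\ref{odesol}). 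Hence, when $f$ solves (\ref{odesol}), $(P,Q)$ satisfies the homogeneous linear system $P'=Q$, $Q'=(\tfrac14+ff')P+(\tfrac12 t+2(f')^2)Q$.

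It remains to check the initial values: $P(0)=v(0,\xi)=\phi(\xi)=0$, and $Q(0)=\tfrac12\phi^{(2)}(\xi)+f'(0)\phi^{(1)}(\xi)=0$ exactly because $f'(0)=-\phi^{(2)}(\xi)/[2\phi^{(1)}(\xi)]$ (this formula presupposes $\phi^{(1)}(\xi)\neq0$, i.e.\ a simple zero). By uniqueness for a linear ODE with continuous coefficients, $(P,Q)\equiv(0,0)$ on the interval of existence of $f$, whence $v(t,f(t))=A(t)=P(t)=0$ for all $t\geq0$. The step I expect to be the real obstacle is deriving $v^{(3)}=xv+tv^{(1)}$ with the correct constants and then carrying the three differentiations carefully enough that (\ref{odesol}) emerges exactly as the closure condition for the $(P,Q)$-system; the rest is bookkeeping and a standard uniqueness argument. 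A minor caveat is that $f$ is a priori defined only on its maximal interval of existence, so global existence on $[0,\infty)$ either belongs to the hypotheses or needs a short separate argument (for instance from the fact that $v^{(1)}(t,f(t))$ never vanishes).
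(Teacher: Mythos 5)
Your proposal is correct, and the computations check out (the system for $A,B,C$, the closure condition forcing $\beta=\tfrac12 t+2(f')^2$, $\alpha=\tfrac14+ff'$, and the matching of the $B$-coefficient being exactly (\ref{odesol}), as well as $P(0)=Q(0)=0$ from the hypotheses on $\xi$ and $f'(0)$). However, it is a genuinely different route from the paper's. The paper argues in the opposite direction: it invokes P\'olya to assert the existence of a curve $f$ with $v(t,f(t))\equiv 0$, differentiates that identity twice in $t$ to get (\ref{hit})--(\ref{hit2}), divides by $v^{(1)}(t,f(t))$, and combines with the same structural identities $v^{(3)}=xv+tv^{(1)}$, $v^{(4)}=v+xv^{(1)}+tv^{(2)}$ to conclude that the zero curve satisfies the Rayleigh equation; the stated implication (ODE solution $\Rightarrow$ zero of $v$) is then implicitly completed by uniqueness for the ODE, which the paper does not spell out, and the division step tacitly requires $v^{(1)}\neq 0$ along the curve. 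Your argument proves the implication in the direction actually asserted: given a solution of (\ref{odesol}), the pair $(P,Q)=(v,\tfrac12 v^{(2)}+f'v^{(1)})$ evaluated along $x=f(t)$ satisfies a homogeneous linear system with continuous coefficients and zero initial data, so it vanishes identically by linear uniqueness. This buys you independence from P\'olya's existence theorem and from any nonvanishing hypothesis on $v^{(1)}$ for $t>0$ (only the simplicity of the zero $\xi$ enters, through the formula for $f'(0)$), at the cost of not explaining how the ODE is found; the paper's derivation is the natural discovery argument but, read literally, establishes necessity rather than the stated sufficiency. Your closing caveat is also apt: both treatments need $f$ to exist on all of $[0,\infty)$, which the theorem tacitly assumes and neither proof supplies.
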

\begin{proof} Given that $\phi$ is as in (\ref{airy4}), its Fourier transform equals
\begin{eqnarray*}
\tilde{\phi}(\lambda)=\exp\left\{-\frac{\lambda^4}{4}\right\}.
\end{eqnarray*}
Furthermore, if we apply the Fourier transform directly to the ODE (\ref{ai}) we have that
\begin{eqnarray*}
(i\lambda)^3\tilde{\phi}=i\frac{d}{d\lambda}\tilde{\phi}.
\end{eqnarray*}
Since this expression is already in Fourier domain, we convolve the previous expression with the heat kernel as follows
\begin{eqnarray*}
\int e^{i\lambda x-\frac{1}{2}\lambda^2t}(i\lambda)^3\tilde{\phi}d\lambda&=&\int ie^{i\lambda x-\frac{1}{2}\lambda^2t}\frac{d}{d\lambda}\tilde{\phi}d\lambda\\
&=&\int ie^{i\lambda x-\frac{1}{2}\lambda^2t}d\tilde{\phi}.
\end{eqnarray*}
This in turn, and by direct application of the integration by parts formula,  yields
\begin{eqnarray}\label{ai41}
\nonumber v^{(3)}&=&-\int\tilde{\phi}i(ix-\lambda t)e^{i\lambda x-\frac{1}{2}\lambda^2t}d\lambda,\\
&=&xv+tv^{(1)},
\end{eqnarray}
as well as 
\begin{eqnarray}\label{ai42}
v^{(4)}&=&v+xv^{(1)}+tv^{(2)},
\end{eqnarray}
after differentiation with respect to $x$.

Next given that there exists an $f$, see P\'olya \cite{polya}, such that the following holds for all $t$
\begin{eqnarray*}
v(t,f(t))=0,
\end{eqnarray*}
we differentiate (Leibniz integral rule) $v$ with $x=f(t)$, defined in (\ref{pearcey}), with respect to $t$ to obtain
\begin{eqnarray*}
\frac{1}{2\pi}\int_{-\infty}^{\infty}(i\lambda f'(t)-1/2\lambda^2)\exp\left\{i\lambda f(t)-\frac{1}{2}\lambda^2t-\frac{\lambda^4}{4}\right\}d\lambda=0,
\end{eqnarray*}
which is equivalent to
\begin{eqnarray}\label{hit}
f'(t)v^{(1)}(t,f(t))+\frac{1}{2}v^{(2)}(t,f(t))&=&0
\end{eqnarray}
and
\begin{eqnarray}\label{hit2}
 f''(t)v^{(1)}+f'(t)(f'(t)v^{(2)}+v^{(3)})+\frac{1}{4}v^{(4)}&=&0
\end{eqnarray}
after differentiation with respect to $t$ twice. We note that equations (\ref{ai41}) and (\ref{ai42}) obtained from the Airy differential equation (\ref{ai}) , as well as (\ref{hit}) and (\ref{hit2}) obtained from the heat equation, involve derivatives of $v$ up to order 4. What remains is to obtain (\ref{odesol}) from these expressions. To this end, from  (\ref{ai41}) and (\ref{ai42}) we first have
\begin{eqnarray*}
\frac{v^{(3)}(t,f(t))}{v^{(1)}(t,f(t))}&=&t\\
\frac{v^{(4)}(t,f(t))}{v^{(1)}(t,f(t))}&=&f(t)+t\frac{v^{(2)}(t,f(t))}{v^{(1)}(t,f(t))}.
\end{eqnarray*}
Next from (\ref{hit}) and (\ref{hit2}) it follows that
\begin{eqnarray*}
f'(t)+\frac{1}{2}\frac{v^{(2)}(t,f(t))}{v^{(1)}(t,f(t))}=0\\
f''(t)+f'(t)\left(f'(t)\frac{v^{(2)}(t,f(t))}{v^{(1)}(t,f(t))}+\frac{v^{(3)}(t,f(t)}{v^{(1)}(t,f(t))}\right)=-\frac{1}{4}\frac{v^{(4)}(t,f(t))}{v^{(1)}(t,f(t))}.
\end{eqnarray*}
These identities yield
\begin{eqnarray*}
f''(t)+f'(t)\left(-2(f'(t))^2+t\right)=-\frac{1}{4}(f(t)-2tf'(t)).
\end{eqnarray*}
This completes the proof of Theorem \ref{th1}.
\end{proof}
\section{Examples}\label{secexamples}
To illustrate Theorem \ref{th1} we next present examples and numerical experiments.
\begin{examplen}\label{hx} One can show that at $t=0$  the following two identities hold for $f$: $f(0)=\xi=2.44197$ and $f'(0)=0.729925$. Hence, from Theorem \ref{th1}, we may plot the solution of (\ref{odesol}),  see Figure \ref{figa}, for $t\in[0,4]$. The ODE was solved numerically using Mathematica.
\end{examplen}

{\tiny
\begin{lstlisting}
Airy4[x_] := 
 1/(2*Pi)*NIntegrate[Exp[I*x*y - y^4/4], {y, -Infinity, Infinity}]
Airy41[x_] := 
 NIntegrate[I Exp[I x y - y^4/4]*y, {y, -Infinity, Infinity}]/(
 2 \[Pi])
Airy42[x_] := 
 NIntegrate[-Exp[I x y - y^4/4]*y^2, {y, -Infinity, Infinity}]/(
 2 \[Pi])
x1 = FindRoot[Re[Airy4[x]], {x, 2.44}]
x1 = x /. x1
fp = -N[Re[Airy42[N[x1]]]]/(2*Re[Airy41[N[x1]]])
s = NDSolve[{g''[x] == -(g[x]/4) - 
     1/2 x g'[x] + 2 (g'[x])^3, g[0] == x1, 
  g'[0] == fp}, g, {x, 0, 4}, AccuracyGoal -> 20, 
  PrecisionGoal -> 10, WorkingPrecision -> 33]
Plot[Evaluate[g[x] /. s], {x, 0, 4}, PlotRange -> All]
\end{lstlisting}
}

\begin{examplen} To test the accuracy of the solution in Example \ref{hx} we may use the following code in Mathematica in the interval $t\in[0,4]$.
\end{examplen}
{\tiny
\begin{lstlisting}
test[x_] := g[x] /. s
F0[t_, x_] := 
 1/(2*Pi)*NIntegrate[
   Exp[I*x*y - y^2*t/2 - y^4/4], {y, -Infinity, Infinity}]
Table[Re[F0[i/100, test[i/100][[1]]]], {i, 0, 400, 1}]
\end{lstlisting}

}

\begin{figure}
\hspace{-.5 cm}\includegraphics[scale=.5,height=7cm,width=9cm]{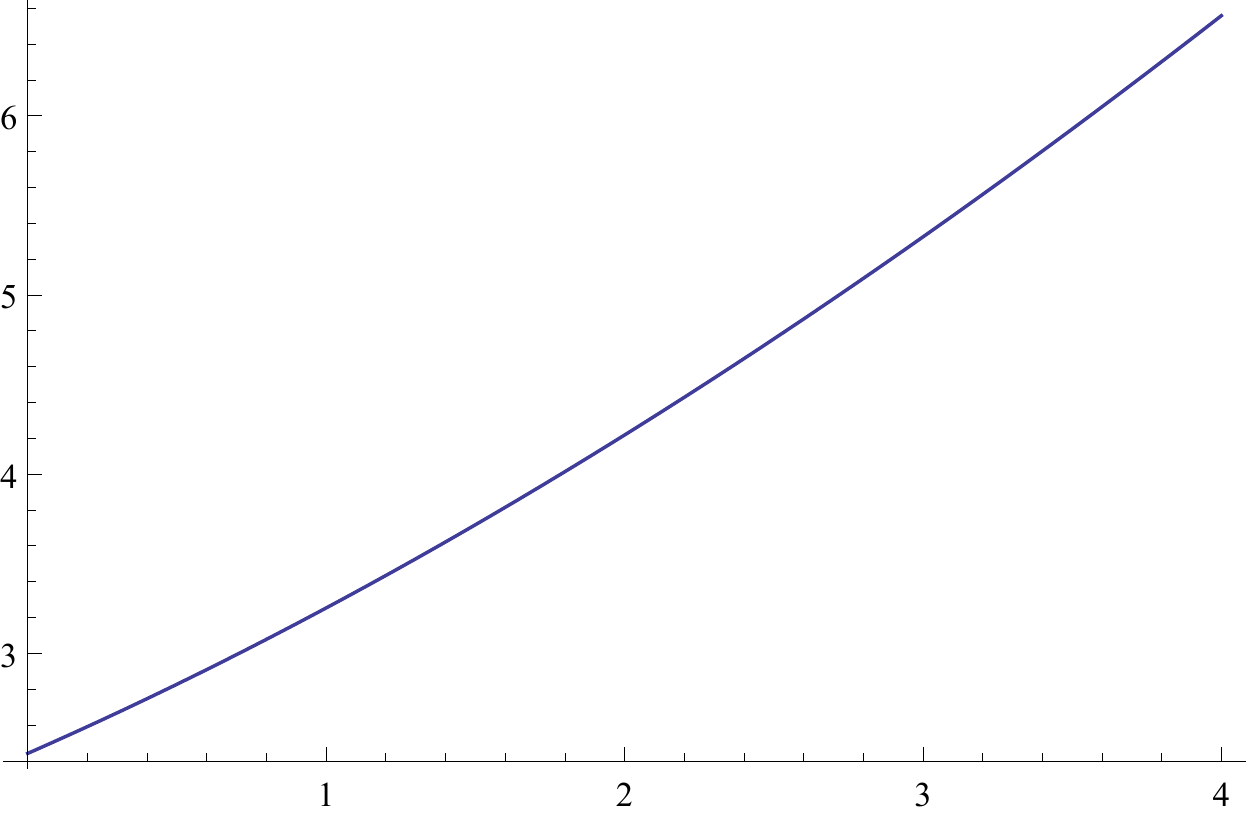}
\caption{Numerical solution of (\ref{odesol})  with $f(0)=\xi=2.44197$, and $f'(0)=0.729925$. The graph was plotted using Mathematica.}   \label{figa} 
\end{figure}

Next, we present some further examples of the methodology discussed in the previous section.

\begin{example}\label{exai}
The Airy function of order 3 solves the following ODE
$$\phi^{(2)}(x)=x\phi(x).$$
Using the same argument as in the proof of Theorem \ref{th1} it follows that 
\begin{eqnarray*}
v^{(2)}(t,x)=xv(t,x)+tv^{(1)}(t,x).
\end{eqnarray*}
This last expression evaluated at $f$ yields
\begin{eqnarray*}
\frac{v^{(2)}}{v^{(1)}}(t,f(t))=t.
\end{eqnarray*}
Finally, from the previous expression and (\ref{hit}) it follows that
\begin{eqnarray*}
f'(t)&=&-\frac{1}{2}\frac{v^{(2)}}{v^{(1)}}(t,f(t))\\
&=&-\frac{1}{2}t.
\end{eqnarray*} 

Thus, for some constant $C$
$$f(t)=C-\frac{1}{4}t^2.$$
See \cite[p.126 ]{vallee}.  For applications of the Airy function in the first hitting time problem of Brownian motion up to a quadratic function see \cite{martin}.
\end{example}
\begin{example} Given the following ODE
\begin{eqnarray*}
\phi^{(2)}(x)=x\phi(x)+\phi^{(1)}(x),
\end{eqnarray*}
and following the same line of reasoning as in the proof of Theorem \ref{th1} we have
\begin{eqnarray*}
v^{(2)}(t,x)=xv(t,x)+tv^{(1)}(t,x)+v^{(1)}(t,x).
\end{eqnarray*}
From (\ref{hit}) we obtain
\begin{eqnarray*}
\frac{v^{(2)}}{v^{(1)}}(t,f(t))=t+1.
\end{eqnarray*}
It follows that
\begin{eqnarray*}
f'(t)=-\frac{t+1}{2}
\end{eqnarray*}
or
\begin{eqnarray*}
f(t)=-\frac{t}{2}-\frac{t^2}{4}+C
\end{eqnarray*}
For example  if $C=-2.58811$ the moving boundary problem of the heat equation associated with $\phi$ is solved.
\end{example}
\begin{example} Given the following Bessel   ODE
\begin{eqnarray*}
\phi''(x)=-\left(\frac{5}{2}-\frac{1}{4}x^2\right)\phi(x).
\end{eqnarray*}
Similar calculations as in the previous examples yield
\begin{eqnarray*}
f(t)=\pm\frac{1}{2}\sqrt{t^2-4}.
\end{eqnarray*}
\end{example}
\begin{example}\label{derivativeairy} The derivative of the Airy function $Ai'(x)$ solves
\begin{eqnarray*}
x\phi''(x)=\phi'(x)+x^2\phi(x),
\end{eqnarray*}
which yields
\begin{eqnarray*}
-tv^{(3)}(t,x)+(t^2-x)v^{(2)}(t,x)+(1+2xt)v^{(1)}(t,x)+(x^2+t)v(t,x)=0.
\end{eqnarray*}
Alternatively, from Example \ref{exai} we also have that
\begin{eqnarray*}
v^{(3)}(t,x)-tv^{(2)}(t,x)-xv^{(1)}(t,x)-2v(t,x)=0.
\end{eqnarray*}
Using  the same arguments as those described in Section \ref{secrayleigh} leads to
\begin{eqnarray}\label{abel}
2f(t)f'(t)+(1+tf(t))=0.
\end{eqnarray}
This is the Abel equation  of the second kind and its solution  can be expressed  in terms of the  Airy function of order 3 $Ai$, as follows:
\begin{eqnarray*}
\frac{1}{2}t Ai\left(\frac{t^2}{4}+f(t)\right)+ Ai'\left(\frac{t^2}{4}+f(t)\right)=0.
\end{eqnarray*}
See Figure \ref{figb} for a numerical example with $f(0)=-1.01879$.
\end{example}
\begin{figure}
\hspace{-.5 cm}\includegraphics[scale=.5,height=7cm,width=9cm]{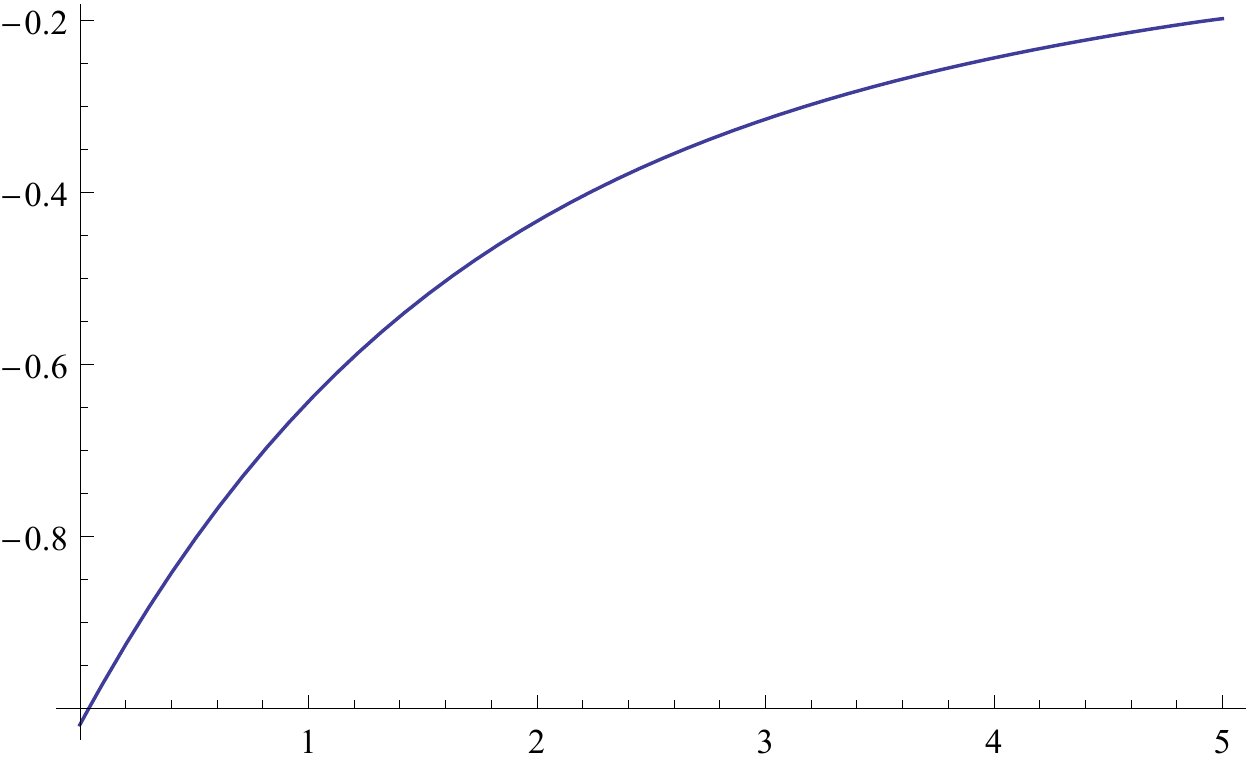}
\caption{The Numerical solution of $f$, defined in Example \ref{derivativeairy}, and such that  $f(0)=-1.01879$ was plotted with Mathematica.}   \label{figb} 
\end{figure}
\section{Zeros of the Pearcey function. An asymptotic approach}\label{secasymptotic}
In this section we carry out an analysis in order to find an asymptotic solution to the moving boundary problem associated with the Pearcey integral. This result is useful when finding the numerical solution of the Rayleigh equation (\ref{odesol}). The main result of this section is the following.
\begin{theorem} Suppose that $v$ is as in (\ref{pearcey}), $\xi$ is any zero of the Airy function of order 3, and 
\begin{eqnarray}\label{aprx}
f(t)=-2(t/3)^{3/2}+\xi(3t)^{1/6},\qquad t\geq 0.
\end{eqnarray}
Then
\begin{eqnarray*}
v(t,f(t))\to 0,
\end{eqnarray*}
as $t\to\infty$.
\end{theorem}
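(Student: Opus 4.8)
The plan is to evaluate $v(t,f(t))$ by the method of steepest descents as $t\to\infty$. The point is that the curve $x=-2(t/3)^{3/2}$ is exactly the locus along which two of the three saddle points of the phase in (\ref{pearcey}) coalesce, so the integral degenerates to an Airy-type integral; the correction term $\xi(3t)^{1/6}$ in (\ref{aprx}) then tunes the argument of that Airy integral to $\xi$, and the conclusion follows because $\xi$ is a zero of $\mathrm{Ai}$.

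\emph{Reduction.} Since every zero of the Airy function of order $3$ is negative, $f(t)<0$ and $f(t)\to-\infty$. Writing the phase as $g(\lambda)=i\lambda x-\tfrac12\lambda^2t-\tfrac14\lambda^4$ and rescaling $\lambda=\sqrt t\,\sigma$, I would first record that with $x=f(t)$ as in (\ref{aprx}) one has $g=t^2\Psi(\sigma)+i\xi\,3^{1/6}t^{2/3}\sigma$, where $\Psi(\sigma)=-\tfrac{2i}{3\sqrt3}\sigma-\tfrac12\sigma^2-\tfrac14\sigma^4$ carries the large parameter $t^2$ and the remaining term is an $O(t^{2/3})$ perturbation coming precisely from the $\xi(3t)^{1/6}$ part of $f$.

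\emph{Coalescing saddle and local Airy reduction.} Next I would locate the saddles of $\Psi$: they solve $\sigma^3+\sigma+\tfrac{2i}{3\sqrt3}=0$, and one checks that $\sigma_0=-i/\sqrt3$ is a \emph{double} root, since $\Psi'(\sigma_0)=\Psi''(\sigma_0)=0$ while $\Psi'''(\sigma_0)=2\sqrt3\,i\neq0$ and $\Psi^{(4)}\equiv-6$; equivalently, on the line $x=-2(t/3)^{3/2}$ the cubic $\lambda^3+t\lambda-ix=0$ has a double root $\lambda_0=-i\sqrt{t/3}$, its third, simple, root $2i\sqrt{t/3}$ lying in the upper half-plane. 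One also notes $\Psi(\sigma_0)=-\tfrac1{12}$ and $i\xi3^{1/6}\sigma_0=3^{-1/3}\xi$ are real and negative, so the prefactor $e^{t^2\Psi(\sigma_0)+i\xi3^{1/6}t^{2/3}\sigma_0}=e^{-t^2/12+3^{-1/3}\xi t^{2/3}}$ is real and decays super-exponentially. I would then deform the real axis to the steepest-descent contour through $\sigma_0$ and substitute $\sigma=\sigma_0+t^{-2/3}u$ — the scaling that keeps both the cubic term $t^2\cdot\tfrac16\Psi'''(\sigma_0)(\sigma-\sigma_0)^3=\tfrac{i}{\sqrt3}u^3$ and the linear term $i\xi3^{1/6}t^{2/3}(\sigma-\sigma_0)=i\xi3^{1/6}u$ of order $1$ while the quartic term is $O(t^{-2/3})$. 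Since $d\lambda=t^{-1/6}\,du$, this yields
\begin{eqnarray*}
v(t,f(t))=\frac{t^{-1/6}}{2\pi}\,e^{-t^2/12+3^{-1/3}\xi t^{2/3}}\left(\int_{C}e^{\frac{i}{\sqrt3}u^3+i\xi3^{1/6}u}\,du+O(t^{-2/3})\right),
\end{eqnarray*}
and after the linear change $u=3^{-1/6}w$ the inner integral becomes $3^{-1/6}\!\int_{C'}e^{i(w^3/3+\xi w)}\,dw=3^{-1/6}\cdot2\pi\,\mathrm{Ai}(\xi)$ once $C'$ is opened to the standard Airy contour. Hence $v(t,f(t))=3^{-1/6}t^{-1/6}e^{-t^2/12+3^{-1/3}\xi t^{2/3}}\bigl(\mathrm{Ai}(\xi)+O(t^{-2/3})\bigr)$; as $\mathrm{Ai}(\xi)=0$ the parenthesis is $O(t^{-2/3})$, and together with the super-exponentially small prefactor this gives $v(t,f(t))\to0$. (The factor $e^{-t^2/12}$ alone already forces the limit; the role of choosing $\xi$ a zero of $\mathrm{Ai}$ is that $f$ then tracks an actual zero curve $f_n$ of Theorem \ref{th1}, the leading Airy term being absent.)

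\emph{Main obstacle.} The hard part is the deformation: I would need to show that the real axis can be moved onto a steepest-descent path through $\sigma_0$ without sweeping across the simple saddle at $2i\sqrt{t/3}$, where $\mathrm{Re}\,g\sim+\tfrac23t^2$ is large and positive, and that the far portions of the new contour are exponentially negligible against $e^{t^2\Psi(\sigma_0)}$. This is the usual topological analysis of the steepest-descent landscape of the Pearcey phase near its caustic, which can be carried out following Paris and Senouf \cite{paris,senouf}; granting it, the remaining work is the routine bookkeeping of checking that the discarded quartic and higher terms, and the curvature of the true descent contour, enter only at relative order $t^{-2/3}$.
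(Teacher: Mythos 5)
Your computation is correct and lands exactly where the paper does: both arguments recenter the integration variable at $-i\sqrt{t/3}$, rescale by $(3t)^{1/6}$, pull out the factor $e^{-t^2/12+3^{-1/3}\xi t^{2/3}}$, and identify the limiting integral as $2\pi\,\mathrm{Ai}(\xi)=0$. The difference is in the machinery. You frame this as a coalescing-saddle steepest-descent analysis (double saddle of the rescaled phase at $\sigma_0=-i/\sqrt3$, local cubic scaling $\sigma=\sigma_0+t^{-2/3}u$, deformation onto the descent path, error bookkeeping), and you flag as the ``main obstacle'' the justification that the contour can be moved past the simple saddle at $2i\sqrt{t/3}$, deferring that to \cite{paris,senouf}. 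The paper's proof sidesteps that obstacle entirely: it performs an \emph{exact} elementary substitution --- multiply and divide by $e^{\alpha^2 t/2-i\alpha y}$, set $u=\lambda+\alpha$ with $t+3\alpha^2=0$ (a horizontal shift of the contour, legitimate by Cauchy's theorem because the integrand is entire and the $e^{-\lambda^4/4}$ factor decays along horizontal lines, so no saddle-point topology and no second saddle ever enter), then rescale $z=u(3t)^{1/6}$ to obtain the identity $(3t)^{1/6}e^{t^2/12-\xi t^{2/3}/3^{1/3}}v(t,f(t))=\frac{1}{2\pi}\int e^{iz\xi+iz^3/3-z^4/(4(3t)^{2/3})}dz$, whose right-hand side tends to $\mathrm{Ai}(\xi)=0$. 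So what you call the hard part is avoidable; conversely, your route buys slightly more, namely the explicit error order $\mathrm{Ai}(\xi)+O(t^{-2/3})$ rather than a bare limit. One caveat applies to both write-ups equally: the final interchange of limit and integration (the limiting Airy integrand is only conditionally convergent) is asserted rather than justified, e.g.\ by an integration by parts or a uniform tail estimate using the $z^3$ oscillation; and, as you correctly observe, the stated conclusion $v(t,f(t))\to0$ already follows from the super-exponential prefactor alone, the choice of $\xi$ as a zero of $\mathrm{Ai}$ mattering only for tracking an actual zero curve.
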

\begin{proof} For brevity let us just consider the term within the brackets in (\ref{pearcey}), {\it i.e.\/},
\begin{eqnarray*}
\exp\left\{i\lambda y-\frac{1}{2}\lambda^2t-\frac{\lambda^4}{4}\right\}.
\end{eqnarray*}
Introduce a variable $-\alpha^2t/2$,
\begin{eqnarray*}
\exp\left\{\lambda(iy+\alpha t)-\frac{1}{2}(\lambda+\alpha)^2t-\frac{\lambda^4}{4}\right\}e^{\frac{1}{2}\alpha^2t}.
\end{eqnarray*}
Set $u=\lambda+\alpha$ and rearrange terms to obtain
\begin{eqnarray*}
&&\exp\left\{u(iy+\alpha t)-\frac{1}{2}u^2t-\frac{(u-\alpha)^4}{4}\right\}e^{-\frac{1}{2}\alpha^2t-i\alpha y}\\
&&\quad=\exp\left\{u(iy+\alpha t+\alpha^3)-\frac{1}{2}u^2(t+3\alpha^2)+\alpha u^3-\frac{u^4}{4}\right\}\\
&&\quad\quad\enskip e^{-\frac{\alpha^4}{4}-\frac{1}{2}\alpha^2t-i\alpha y}.
\end{eqnarray*}
To get rid of the {\it heat\/} (or quadratic) term note that
\begin{eqnarray}\label{alpha}
t+3\alpha^2=0,\qquad \hbox{gives}\qquad \alpha_\pm=\pm i\sqrt{\frac{t}{3}}.
\end{eqnarray}
That is,
\begin{eqnarray*}
\exp\left\{ui\left[y\pm\frac{2}{3^{3/2}}t^{3/2}\right]\pm\frac{i}{3^{1/2}}t^{1/2} u^3-\frac{u^4}{4}\right\}e^{-\frac{\alpha^4}{4}-\frac{1}{2}\alpha^2t-i\alpha y}.
\end{eqnarray*}
Next,  if we choose $\alpha_+$, as in (\ref{alpha}),
\begin{eqnarray*}
\exp\left\{ui\left[y+\frac{2}{3^{3/2}}t^{3/2}\right]+\frac{i}{3^{1/2}}t^{1/2} u^3-\frac{u^4}{4}\right\}e^{\frac{5}{36}t^2+\sqrt{\frac{t}{3}}y},
\end{eqnarray*}
and thus, from (\ref{pearcey}),
\begin{eqnarray*}
v(t,y)=e^{\frac{5}{36}t^2+\sqrt{\frac{t}{3}}y}\frac{1}{2\pi}\int_{-\infty}^\infty e^{ui\left[y+\frac{2}{3^{3/2}}t^{3/2}\right]+i\sqrt{3t} \frac{u^3}{3}-\frac{u^4}{4}}du.
\end{eqnarray*}
Now, let $z=u(3t)^{1/6}$, $u=z(3t)^{-1/6}$, and $(3t)^{-1/6}dz=du$, which yields
\begin{eqnarray*}
(3t)^{-1/6}e^{\frac{5}{36}t^2+\sqrt{\frac{t}{3}}y}\frac{1}{2\pi}\int_{-\infty}^\infty e^{iz\frac{\left[y+\frac{2}{3^{3/2}}t^{3/2}\right]}{(3t)^{1/6}}+i \frac{z^3}{3}-\frac{z^4}{4(3t)^{2/3}}}dz,
\end{eqnarray*}
or equivalently 
\begin{eqnarray*}
(3t)^{1/6}e^{-\frac{5}{36}t^2-\sqrt{\frac{t}{3}}y}v(t,y)=\frac{1}{2\pi}\int_{-\infty}^\infty e^{iz\frac{\left[y+\frac{2}{3^{3/2}}t^{3/2}\right]}{(3t)^{1/6}}+i \frac{z^3}{3}-\frac{z^4}{4(3t)^{2/3}}}dz.
\end{eqnarray*}
Letting $y=-2(t/3)^{3/2}+\xi(3t)^{1/6}$
\begin{eqnarray*}
(3t)^{1/6}e^{\frac{t^2}{12}-\xi\frac{t^{2/3}}{3^{1/3}}}v\left(t,-2\left[\frac{t}{3}\right]^{3/2}+\xi(3t)^{1/6}\right)=\frac{1}{2\pi}\int_{-\infty}^\infty e^{iz\xi+i \frac{z^3}{3}-\frac{z^4}{4(3t)^{2/3}}}dz.
\end{eqnarray*}
Hence for arbitrary $\xi$
\begin{eqnarray*}
\lim\limits_{t\to\infty}\left\{(3t)^{1/6}e^{\frac{t^2}{12}-\xi\frac{t^{2/3}}{3^{1/3}}}v\left(t,-2\left[\frac{t}{3}\right]^{3/2}+\xi(3t)^{1/6}\right)\right\}=\hbox{Ai}(\xi).
\end{eqnarray*}
In particular if $\xi$ is a zero of the Airy function  then
\begin{eqnarray*}
\lim\limits_{t\to\infty}\left\{(3t)^{1/6}e^{\frac{t^2}{12}-\xi\frac{t^{2/3}}{3^{1/3}}}v\left(t,-2\left[\frac{t}{3}\right]^{3/2}+\xi(3t)^{1/6}\right)\right\}&=&\hbox{Ai}(\xi)\\
&=&0.
\end{eqnarray*}
\end{proof}
\begin{examplen}
For instance, if $\xi=-2.33811$ we have that $f(t)=2(t/3)^{3/2}+2.33811(3t)^{1/6}$. See Figure \ref{fig8}.
\end{examplen}
Next, using the results of this Section and Section \ref{secrayleigh}, we present an algorithm which can be used to solve  (\ref{odesol}) in Theorem \ref{th1} . 
\begin{algorithm} For some root $\xi$ of the Airy function Ai, let $\tilde{f}$ be defined as in (\ref{aprx}).  Given an arbitrary time $t\in\mathbb{R}^+$ we may find a solution to equation (\ref{odesol}) in the interval $[t-\varepsilon,t+\varepsilon]$ as follows.
\begin{eqnarray*}
\left\{ \begin{array}{l}
\hbox{Given $t$ and $\xi$,  find a root of function $v$ , defined in (\ref{pearcey}), starting at $\tilde{f}(t)$.}\\
\hbox{Lef $f(t)$ be the root obtained in the previous step,}\\
\quad\hbox{ then $f'(t)=-1/2\cdot v^{(2)}(t,f(t))/v^{(1)}(t,f(t))$.}\\
\hbox{With $f(t)$ and $f'(t)$ solve (\ref{odesol}) in $[t-\varepsilon,t+\varepsilon]$, for some $\varepsilon>0$.}
\end{array}
  \right.
\end{eqnarray*}
\begin{examplen} Suppose we choose $t=10$ and $\varepsilon=2$. Then the procedure is the following
\end{examplen}
{\tiny
\begin{lstlisting}
F1[t_, x_] := 
 1/(2*Pi)*NIntegrate[(I*y)*
    Exp[I*x*y - y^2/2*t - y^4/4], {y, -Infinity, Infinity}]
F2[t_, x_] := 
 1/(2*Pi)*NIntegrate[(I*y)^2*
    Exp[I*x*y - y^2/2*t - y^4/4], {y, -Infinity, Infinity}]
asy[t_] := 2*(t/3)^(3/2) - AiryAiZero[1]*(3*t)^(1/6)
x10 = FindRoot[Re[F0[10, t]], {t, N[asy[10]]}]
x10 = t /. x10
fp10 = -Re[F2[10, x10]]/(2*Re[F1[10, x10]])
s10 = NDSolve[{g''[x] == -1/4*g[x] + 2*(g'[x])^3 - 1/2*x*g'[x], 
   g[10] == x10, g'[10] == fp10}, g, {x, 8, 12}, AccuracyGoal -> 20, 
  PrecisionGoal -> 10, WorkingPrecision -> 30]
\end{lstlisting}
}

\end{algorithm}
\begin{figure}
\hspace{-.5 cm}\includegraphics[scale=.5,height=7cm,width=9cm]{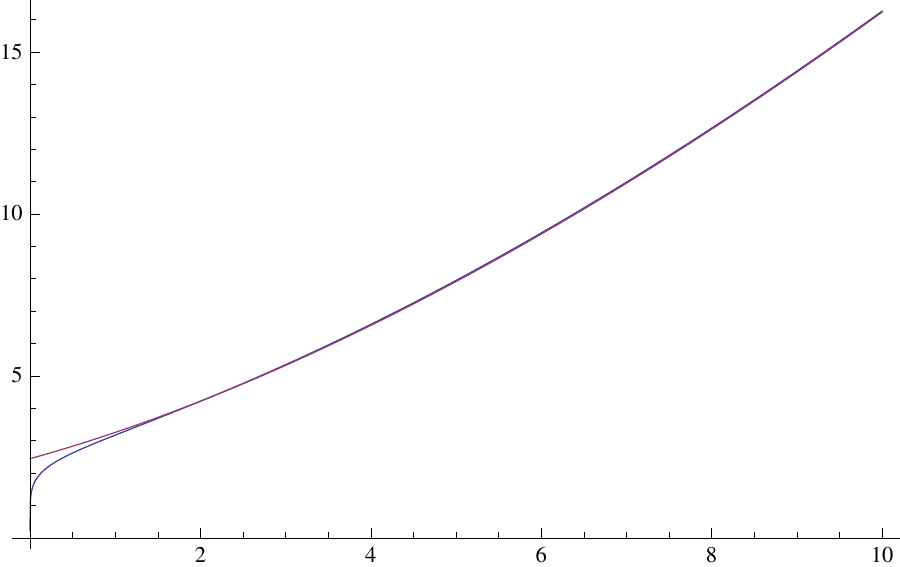}
\caption{The red line is the real boundary $f$, the blue is as in ($\ref{aprx}$), with $\xi=-2.33811$. The graph was plotted with Mathematica.}   \label{fig8} 
\end{figure}
\section{Possible applications and work in progress}
Due to the stochastic and periodic nature of several economic variables, as for instance Mexico's general CPI or the Fruit and Vegetable annual inflation  and assuming $W$ is a random walk, these processes can be modelled as
\begin{eqnarray}\label{sine}
X_t=\sum\limits_{j=1}^n\beta_j\sin(\phi_j+2\pi\nu_jt)+W_t,\qquad t=1,2,\dots,
\end{eqnarray}
where the $\beta_j$ and $\phi_j$ represent  respectively the amplitude and phase at a time given frequency $\nu_j$. In turn a continuous time approximation of (\ref{sine}) can be expressed in terms of  the solution of an SDE of the form since
\begin{eqnarray}\label{sde}
dX_t&=&\mu(t,X_t)dt+\sigma(t,X_t)dW_t,\\
\nonumber &\sim&\\
\nonumber\Delta X_j&=&\mu(j,X_j)\Delta j+\sigma(j,X_j)\Delta W_j,\qquad j=0,1,\dots,
\end{eqnarray}
where functions $\mu$ and $\sigma$ are respectively:
\begin{eqnarray*}
\mu(t,X_t)=\sum\limits_{j=1}^n2\pi\nu_j\beta_j\cos(\phi_j+2\pi\nu_jt),\quad\hbox{and}\quad\sigma(t,X_t)=1,
\end{eqnarray*}
and $W$ is a Wiener process. A reasonable set of questions that could be asked could be for instance:
\begin{eqnarray*}
\hbox{What is the probability that Fruit and Vegetable annual}\\
\hbox{ inflation will reach 20 points before the end of 2016?}\\[0.3cm]
\hbox{What is probability that the general CPI will remain}\\
\hbox{between 3 and 4 percent until the end of 2017?} 
\end{eqnarray*}
As it turns out, to  answer the previous questions it is necessary to understand the moving boundary problem of heat equation addressed in this work. More specific examples is still work in progress.

\section{Concluding remarks}\label{conclusions}
In this work we find the zeros of the Pearcey function, in terms of the solution of a Rayleigh-type equation. This  goal is achieved by exploiting, on the one hand, the differential equation of an Airy function of order 4 and on the other by using the fact that the Pearcey function is a solution of the heat equation. As a by-product we develop a methodology, using straightforward techniques, to solve the moving boundary problem of the heat equation in the case in which the convolving function is a generalized Airy function. We expect that the techniques described within can be used in the construction of densities of the first hitting time problem of Brownian motion. The scope and applicability to the latter problem is still work in progress.

\end{document}